\newtheorem{theorem}{Theorem}[section]
\newtheorem{lemma}[theorem]{Lemma}
\newtheorem{proposition}[theorem]{Proposition}
\theoremstyle{definition}
\newtheorem{example}[theorem]{Example}
\newtheorem*{ThmA}{Theorem A}
\newenvironment{enumeratei}{\begin{enumerate}[\upshape (a)]}
    {\end{enumerate}}
\def\irr#1{{\rm Irr}(#1)}
\def\cent#1#2{{\bf C}_{#1}(#2)}
\def\syl#1#2{{\rm Syl}_#1(#2)}
\def\zent#1{{\bf Z}(#1)}
\newcommand{\N}{{\mathbb N}}
\newcommand{\F}{{\mathbb F}}
\def\irr#1{{\rm Irr}(#1)}
\def\cent#1#2{{\bf C}_{#1}(#2)}
\def\syl#1#2{{\rm Syl}_#1(#2)}
\def\zent#1{{\bf Z}(#1)}
\def\SL#1{{\rm SL}_{2}(#1)}
\def\PSL#1{{\rm PSL}_{2}(#1)}
\def\V#1{{\rm V}(#1)}
\def\irr#1{{\rm Irr}(#1)}
\def\cd#1{{\rm cd}(#1)}
\def\cent#1#2{{\bf C}_{#1}(#2)}
\def\syl#1#2{{\rm Syl}_#1(#2)}
\def\zent#1{{\bf Z}(#1)}
\mathchardef\coso="2023
\begin{document}

\title[Bounding the number of vertices of the degree graph]{Bounding the number of vertices \\in the degree graph of a finite group}

\author[Z. Akhlaghi et al.]{Zeinab Akhlaghi}
\address{Zeinab Akhlaghi, Faculty of Math. and Computer Sci., \newline Amirkabir University of Technology (Tehran Polytechnic), 15914 Tehran, Iran.}
\email{z\_akhlaghi@aut.ac.ir}

\author[]{Silvio Dolfi}
\address{Silvio Dolfi, Dipartimento di Matematica e Informatica U. Dini,\newline
Universit\`a degli Studi di Firenze, viale Morgagni 67/a,
50134 Firenze, Italy.}
\email{dolfi@math.unifi.it}

\author[]{Emanuele Pacifici}
\address{Emanuele Pacifici, Dipartimento di Matematica F. Enriques,
\newline Universit\`a degli Studi di Milano, via Saldini 50,
20133 Milano, Italy.}
\email{emanuele.pacifici@unimi.it}

\author[]{Lucia Sanus}
\address{Lucia Sanus, Departament de Matem\`atiques, Facultat de
 Matem\`atiques, \newline
Universitat de Val\`encia,
46100 Burjassot, Val\`encia, Spain.}
\email{lucia.sanus@uv.es}

\thanks{The research of the second and  third  author is partially supported by the Italian PRIN 2015TW9LSR\_006 ``Group Theory and Applications'.
The research of the fourth author is supported by  the Spanish  Ministerio de Economia y Competitividad proyecto MTM2016-76196-P,  partly with  FEDER funds, and PrometeoII/2015/011-Generalitat Valenciana}

\subjclass[2000]{20C15}

\begin{abstract}
Let \(G\) be a finite group, and let \(\cd G\) denote the set of degrees of the irreducible complex characters of \(G\). The \emph{degree graph} \(\Delta(G)\)  of \(G\) is defined as the simple undirected graph whose vertex set \(\V G\) consists of the prime divisors of the numbers in \(\cd G\), two distinct vertices \(p\) and \(q\) being adjacent if and only if \(pq\) divides some number in \(\cd G\). In this note, we provide an upper bound on the size of \(\V G\) in terms of the \emph{clique number} \(\omega(G)\) (i.e., the maximum size of a subset of \(\V G\) inducing a complete subgraph) of \(\Delta(G)\). Namely, we show that \(|\V G|\leq{\rm{max}}\{2\omega(G)+1,\;3\omega(G)-4\}\). 
%is bounded above by the largest number among \(2\omega(G)+1\) and \(3\omega(G)-4\). 
Examples are given in order to show that the bound is best possible. This completes the analysis carried out in \cite{ACDKP} where the solvable case was treated, extends the results in \cite{AKT,ATV,TV}, and answers a question posed by the first author and H.P. Tong-Viet in \cite{ATV}. 
\end{abstract}

\maketitle

\section{Introduction}

The \emph{degree graph} of a finite group $G$, that we denote by $\Delta(G)$, is defined as the (simple undirected) prime graph
related to the set $\cd G$ of the irreducible complex character degrees of $G$. 
The  vertex
set \(\V G\) of $\Delta(G)$ consists of all the prime numbers that divide some $d \in \cd G$, and 
two distinct vertices $p$, $q$  are adjacent in $\Delta(G)$  if and only if there exists $d \in \cd G$ such that the product $pq$ divides $d$.

The properties of the graph $\Delta(G)$, as well as their connections with  the algebraic structure of the group $G$, 
have been object of investigation in the last few decades. We refer to~\cite{Lew} for a survey on this topic.

One question that has been considered,  concerns the existence of ``large'' complete subgraphs (cliques) in $\Delta(G)$.
Given a finite group \(G\), let us denote by \(\omega(G)\) the \emph{clique number} of \(\Delta(G)\), that is, the maximum size of a subset in \(\V G\) whose elements are pairwise adjacent in \(\Delta(G)\). H.P. Tong-Viet proves in~\cite{TV} that if $\omega(G) \leq 2$, then $|\V G| \leq 5$; moreover, he classifies the groups $G$ such that
  $\omega(G) = 2$ and $|\V G| = 5$. Again Tong-Viet together with the first author prove that, if \(G\) is a group with \(\omega(G)\leq 3\), then \(|\V G|\) is at most \(7\) (\cite{ATV}); this bound is best possible in general, but it is improved to \(6\) under the additional assumption that \(G\) is solvable. In view of these results, the authors of \cite{ATV} suggest that the inequality

\smallskip
\noindent \((1)\qquad\qquad\qquad\qquad\qquad\qquad |\V G|\leq 2\omega(G)+1\hfill\) 
\smallskip

\noindent should be true for any finite group \(G\), whereas, for \(G\) solvable, \(|\V G|\leq 2\omega(G)\) should hold (\cite[Conjecture 1]{ATV}). 

The latter inequality, for the solvable case, was proved true in \cite[Corollary~B]{ACDKP} as a consequence of the fact that the vertex set of $\Delta(G)$ is covered by two suitable subsets of \(\V G\) inducing complete subgraphs. We remark here that the same is not true without the solvability assumption; the smallest example is $\Delta({\rm PSL}_2(4))$, that has three vertices and no edges. As regards the general case, a recent paper (\cite{AKT}) provides some more evidence for Inequality (1), establishing that it holds also for \(\omega(G)= 4\). Nevertheless, (1) turns out to be false in general, as shown by the following example.

\begin{example}
\label{ex}
Let \(\Pi=\{u_1^{\alpha_1},...,u_n^{\alpha_n}\}\) be a set of prime powers where every prime \(u_i\) is larger than \(3\). Assume that, for every \(i\in\{1,...,n\}\), we have \(|\pi(u_i^{\alpha_i}-1)\setminus\{2,3\}|=|\pi(u_i^{\alpha_i}+1)\setminus\{2,3\}|=1\), and assume further that, for distinct \(r\) and \(s\) in \(\{1,...,n\}\), the intersection of the sets \(\{u_r\}\cup\pi(u_r^{2\alpha_r}-1)\) and \(\{u_s\}\cup\pi(u_s^{2\alpha_s}-1)\) is \(\{2,3\}\). 
Now, setting \(G_{\Pi}=\PSL{u_1^{\alpha_1}}\times\cdots\times\PSL{u_n^{\alpha_n}}\), it is easy to see that \(|\V {G_{\Pi}}|=3n+2\) and \(\omega(G_{\Pi})=n+2\), thus \(|\V {G_{\Pi}}|=2\omega(G_{\Pi})+n-2=3\omega(G_{\Pi})-4\). As a consequence, if \(n\geq 4\), Inequality (1) does not hold for the group \(G_{\Pi}\).

%For \(i\in\{1,...,n\}\), let \(u_i\) be a prime number larger than \(3\), and let \(u_i^{\alpha_i}\) be a power of \(u_i\) such that  \(|\pi(u_i^{\alpha_i}-1)\setminus\{2,3\}|=|\pi(u_i^{\alpha_i}+1)\setminus\{2,3\}|=1\); assume further that, for distinct \(r\) and \(s\) in \(\{1,...,n\}\), the intersection of the sets \(\{u_r\}\cup\pi(u_r^{2\alpha_r}-1)\) and \(\{u_s\}\cup\pi(u_s^{2\alpha_s}-1)\) is \(\{2,3\}\). 
%Now, setting \(G_n=\PSL{u_1^{\alpha_1}}\times\cdots\times\PSL{u_n^{\alpha_n}}\), it is easy to see that \(|\V {G_n}|=3n+2\) and \(\omega(G_n)=n+2\), thus \(|\V {G_n}|=2\omega(G_n)+n-2\). As a consequence, if \(n\geq 4\), Inequality (1) does not hold for the group \(G_n\).

Note that \(\Pi=\{29, 67, 157, 227\}\) is a set of four prime powers (in fact, of four primes) satisfying the above conditions. This provides a counterexample to Inequality (1) in which the number of vertices is \(14\), whereas the largest clique has size \(6\).
\end{example}

We remark that, if it is possible to find sets \(\Pi_n\) as in Example~\ref{ex} \emph{having arbitrarily large size \(n\)}, then the ratio \(|\V{G_{\Pi_n}}|/\omega(G_{\Pi_n})\) would converge to \(3\) as \(n\) tends to infinity (for instance, in the set of all prime numbers up to \(10^6\), there exists a subset \(\Pi\) satisfying the relevant conditions, with \(|\Pi|=15615\); for the group \(G_{\Pi}\) constructed as in Example~\ref{ex}, we thus have that \(|\V{G_{\Pi}}|/\omega(G_{\Pi})\) is larger than \(2.999\)); this leads us to conjecture that \emph{for every positive real number \(\epsilon\), there exists a group \(G\) such that \(|\V{G}|/\omega(G)>3-\epsilon\).} In fact, in \cite{EG}, the authors estimate the asymptotic density of one particular set of primes \(\overline{B}\), whose infinitude yields the existence of a set \(\Pi_n\) as above for every \(n\in\N\); a consequence of their work is that \(\overline{B}\) is actually infinite, assuming a generalized form of the Hardy-Littlewood conjecture. 

\smallskip
On the other hand, as the main result of this note, we prove:

\begin{ThmA}
Let \(G\) be a finite group. Then the following conclusions hold.
\begin{enumeratei}
\item If \(\omega(G)\leq 5\), then  $|\V G|\leq 2\omega(G)+1$.
\item If \(\omega(G)\geq 5\), then \(|\V G|\leq 3\omega(G)-4\).
\end{enumeratei}
\end{ThmA}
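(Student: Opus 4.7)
The plan is to begin with the standard reductions: if $G$ is solvable then \cite[Corollary~B]{ACDKP} already yields $|\V G|\leq 2\omega(G)$, which is stronger than both parts (a) and (b); for $\omega(G)\leq 4$ part (a) coincides with the results of \cite{TV,ATV,AKT}; and for $\omega(G)=5$ the two bounds $2\omega(G)+1$ and $3\omega(G)-4$ agree (both equal $11$). Thus the entire content of Theorem~A reduces to proving part (b) for a non-solvable group $G$ with $\omega(G)\geq 5$. I would argue by induction on $|G|$ and assume $G$ is a minimal counterexample.

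Write $R=\text{Sol}(G)$ and let $N/R$ be a minimal normal subgroup of $G/R$, so that $N/R$ is a direct product of copies of some non-abelian simple group $S$. The strategy is to split $\V G$ into the primes coming from $\V{G/N}$, the primes contributed by the characteristic-like part of the simple sections $S$, and the primes contributed by the cyclotomic-like parts $\pi(q\pm 1)$ when $S=\PSL_2(q)$. Via Clifford theory applied to characters of $N$ lying over characters of $R$, each new prime in $\V G\setminus \V{G/N}$ can be traced either to $\cd S$ or to the index $|G:I_G(\theta)|$ for some $\theta\in\irr N$. The inductive hypothesis controls the piece coming from $G/N$; what must be bounded is the contribution of $N/R$ together with the ``extra'' primes introduced when passing from $G/N$ to $G$, measured against the increase in $\omega(G)$ produced by these same primes.

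The central ingredient is the Classification of Finite Simple Groups, used to establish a local inequality of the form $|\V S|\leq 3\omega(S)-4$ (together with a refined version recording how $\V S$ is partitioned into cliques) for every non-abelian simple group $S$. The extremal case is precisely $S=\PSL_2(q)$, whose degree graph consists of two cliques $\pi(q-1)$ and $\pi(q+1)$ meeting in the primes dividing $\gcd(q-1,q+1)$, plus the isolated defining characteristic vertex; as Example~\ref{ex} shows, iterating this case is what produces the coefficient $3$. For all other families of simple groups one obtains a better local ratio by a direct CFSG inspection of known character degree tables, so those composition factors can only help. The contribution of $R$ is absorbed by applying the solvable bound $|\V R|\leq 2\omega(R)$ together with the fact that primes in $\V R$ that are adjacent in $\Delta(G)$ to primes coming from $N$ tend to enlarge existing cliques rather than create new isolated vertices.

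The hard part will be the combinatorial bookkeeping when several non-abelian chief factors (most dangerously, several $\PSL_2$-type factors) appear simultaneously. One must show that every new prime contributed by a chief factor participates in a common clique of $\Delta(G)$ with some ``reservoir'' of previously existing primes, so that each batch of at most three new vertices forces $\omega(G)$ to grow by at least one. Equivalently, one has to rule out configurations in which the new primes form many small pairwise non-adjacent cliques without enlarging the global clique number. I expect this step to require a careful case analysis of the possible intersections of the prime sets $\{u_i\}\cup \pi(u_i^{2\alpha_i}-1)$ arising from the different simple sections, matched against the edges that the ambient group $G$ adds on top of the edges of each $\Delta(S)$, and a dedicated treatment of the transitional value $\omega(G)=5$ where the two bounds meet.
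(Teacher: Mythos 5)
Your plan has two genuine gaps, and the first is a false statement at its core. The ``central ingredient'' you propose --- a local inequality \(|\V S|\leq 3\omega(S)-4\) for \emph{every} non-abelian simple group \(S\) --- fails precisely for the family you single out as extremal: \(\Delta(\PSL 4)\) and \(\Delta(\PSL 8)\) have three vertices and no edges at all, so \(\omega(S)=1\) and \(3\omega(S)-4=-1<3=|\V S|\). The best true local statement is of the shape \(|\V S|\leq\max\{2\omega(S)+1,\,3\omega(S)-4\}\), and with that weaker form your bookkeeping (``each batch of at most three new vertices forces \(\omega(G)\) to grow by at least one'') does not follow from local data: it requires knowing that cliques coming from a chief factor concatenate with cliques coming from the rest of the group into larger cliques of \(\Delta(G)\). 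That gluing mechanism is exactly the step you defer to a ``careful case analysis'', but it is not a technical afterthought --- it is the entire substance of the theorem, and nothing in the proposal supplies it. Without an adjacency statement between the new primes and the old ones, \(\omega(G)\) simply cannot be compared with the sum of the local clique numbers, and configurations of many small pairwise non-adjacent cliques are not ruled out.

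The paper's route is structurally different and shows how both gaps are closed. It does not survey all simple groups via chief factors over \({\rm Sol}(G)\); instead it quotes Theorem~A of \cite{ACDPS} (Proposition~\ref{applications1}(a)): if \({\overline\Delta}(G)\) contains an odd cycle, then \(G\) has a \emph{characteristic} subgroup isomorphic to \(\SL {u^\alpha}\) or \(\PSL{u^\alpha}\) with \(u^\alpha\geq 4\); otherwise \({\overline\Delta}(G)\) is bipartite and two cliques cover \(\V G\), giving \(|\V G|\leq 2\omega(G)\) outright. This dichotomy collapses the CFSG analysis into one citation and reduces everything to normal \((\mathrm{P})\mathrm{SL}_2\)'s. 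The gluing fact your plan lacks is Proposition~\ref{applications1}(c): with \(C=\cent GK\) for \(K\) the product of such normal subgroups, every vertex of \(\Delta(C)\) is adjacent in \(\Delta(G)\) to every prime of \(|G/C|\) (up to one explicitly described exception of type \(\SL{u^\alpha}\) centrally extended by \(Q_8\)), whence \(\omega(G)\geq\omega+\omega(C)\) and the induction on \(|G|\) closes arithmetically, using White's theorem (Lemma~\ref{W}) to cover \(\V{G/C}\setminus\{u\}\) by two cliques. Note also that your reduction of ``the entire content'' to part (b) discards the hardest case rather than avoiding it: the induction for (b) invokes the \(2\omega+1\) bound for sections with \(\omega(C)\leq 5\), and the extremal configuration \(|\V G|=11=2\omega(G)+1\) at \(\omega(G)=5\) (three \(\mathrm{PSL}_2\)-factors with \(\V{N_i}\cap\V{N_j}=\{2,3\}\), each contributing exactly one prime outside \(\{2,3\}\) per clique) is established only through the detailed analysis in the proof of (a), which the citations \cite{TV,ATV,AKT} cover only up to \(\omega(G)\leq 4\).
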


\noindent(In other words, for every finite group \(G\), we have that $|\V G|$ is bounded above by the largest number among $2\omega(G)+1$ and $3\omega(G)-4$; note that these two numbers match at the value \(11\) for \(\omega(G)=5\).) In view of Example~\ref{ex} and of the comments following it, this bound is best possible. Also, our proof of Theorem~A(a) provides a shorter argument for the theorems of \cite{AKT,ATV,TV}, in which Inequality~(1) is established for \(\omega(G)\leq 4\), and extends that analysis by showing that (1) holds for \(\omega(G)=5\) as well.

A key ingredient in our proof of Theorem~A is the main theorem of \cite{ACDPS}, which provides some detailed structural information concerning finite groups \(G\) such that \(|\V G|\leq 2\omega(G)\) does not hold. The part of this information that is relevant for our purposes is gathered in Proposition~\ref{applications1}.  

Finally, in the following discussion every group is assumed to be finite, and the classification of finite simple groups is involved (via the main theorem of \cite{ACDPS}).

\section{The results}

We start by recalling some facts concerning the degree graph of non-solvable groups. 

\begin{lemma}\label{PSL2}
Let $S \simeq \PSL{u^{\alpha}}$ or $S \simeq \SL{u^{\alpha}}$, where $u$ is a prime and $\alpha \geq 1$. 
Let $\pi_{+} = \pi(u^{\alpha}+1)$ and $\pi_{-} = \pi(u^{\alpha}-1)$. For a subset $\pi$ of vertices 
of $\Delta(S)$, we denote by $\Delta_{\pi}$ the subgraph of $\Delta = \Delta(S)$ induced
by the subset $\pi$.
\begin{enumeratei}
\item If $u=2$, then $\Delta(S)$ has three connected components, $\{u\}$, $\Delta_{\pi_{+}}$ and 
$\Delta_{\pi_{-}}$, and each of them is a complete graph.  
\item If $u > 2$ and $u^{\alpha} > 5$, then  $\Delta(S)$ has two connected components, 
$\{u\}$ and  $\Delta_{\pi_{+} \cup \pi_{-}}$; also, both  $\Delta_{\pi_{+}}$ and $\Delta_{\pi_{-}}$ are
complete graphs, no vertex in $\pi_{+}\setminus\{2\}$ is adjacent to any vertex in  
$\pi_{-}\setminus\{2\}$, and $2$ is adjacent to all other vertices in $\Delta_{\pi_{+} \cup \pi_{-}}$. 
\end{enumeratei}
\end{lemma}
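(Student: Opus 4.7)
The plan is to deduce both parts directly from the classical character tables of $\PSL q$ and $\SL q$ (where we set $q = u^{\alpha}$). In every case the nontrivial irreducible character degrees of $S$ form a subset of
\[
\{q,\; q-1,\; q+1,\; (q-1)/2,\; (q+1)/2\},
\]
with the last two values present only when $q$ is odd, and with $q$, $q-1$, $q+1$ actually occurring as degrees whenever $q > 5$. This immediately gives $\V S = \{u\}\cup\pi_{+}\cup\pi_{-}$, and reduces the problem to elementary bookkeeping on these at most five integers.

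For part (a), with $u = 2$, one has $\gcd(q-1,q+1)=1$ (both are odd), while $u=2$ divides $q$ but not $q\pm 1$; hence $\{u\}$, $\pi_{+}$ and $\pi_{-}$ are pairwise disjoint. The degree $q$ is a pure power of $u$ and so contributes no edge; the degree $q-1$ turns $\pi_{-}$ into a clique and $q+1$ does the same for $\pi_{+}$; and since no degree on the list above is divisible by primes from two different sets, there are no cross-edges. Thus the three induced subgraphs are exactly the connected components of $\Delta(S)$.

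For part (b), with $u$ odd and $q > 5$, the same reasoning gives that $\pi_{+}$ and $\pi_{-}$ are cliques and that the vertex $u$ is isolated, since $q \equiv 0 \pmod u$ forces $q \pm 1 \equiv \pm 1 \pmod u$, so $q$ is the only degree divisible by $u$. The new feature with respect to (a) is that $\pi_{+}\cap\pi_{-}=\{2\}$: both $q\pm 1$ are even, whereas any common odd prime would have to divide $(q+1)-(q-1)=2$. Consequently $2$ is joined through $q+1$ to every vertex of $\pi_{+}\setminus\{2\}$ and through $q-1$ to every vertex of $\pi_{-}\setminus\{2\}$, while there can be no edge between $\pi_{+}\setminus\{2\}$ and $\pi_{-}\setminus\{2\}$. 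The possibly-occurring degrees $(q\pm 1)/2$ differ from $q\pm 1$ only by a factor of $2$, so they introduce neither new vertex nor new edge.

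The only real obstacle is being careful with the character-theoretic input: one must verify that for $q > 5$ both $q-1$ and $q+1$ do appear in $\cd S$ (so that the cliques $\pi_{+}$ and $\pi_{-}$ genuinely show up in $\Delta(S)$), and that no degree outside the five-element list above can occur in either $\PSL q$ or $\SL q$. The threshold $q>5$ in (b) is precisely what excludes the degenerate cases $\PSL 3$ and $\PSL 5$, where the character table is too small for the described picture to apply.
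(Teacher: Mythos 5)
Your proposal is correct and follows essentially the same route as the paper: the paper's proof likewise just quotes the known degree sets $\cd{\PSL{u^\alpha}}$ and $\cd{\SL{u^\alpha}}$ (from Dornhoff, Theorems 38.1 and 38.2) and leaves the divisibility bookkeeping implicit, which you have simply written out in full. Your explicit verifications (that $\gcd(q-1,q+1)=1$ for $q$ even, that $\pi_+\cap\pi_-=\{2\}$ for $q$ odd, and that the degrees $(q\pm1)/2$ contribute nothing new) are all accurate, so there is nothing to correct.
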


\begin{proof}
It is well known (see for instance~\cite[Theorems 38.1, 38.2]{Dor}) that, for $\alpha \geq 2$,
  $$\cd{\SL{2^{\alpha}}} = \cd{\PSL{2^{\alpha}}} = \{1, 2^{\alpha} -1, 2^{\alpha}, 2^{\alpha} +1\}, $$
  and that, for $u \neq 2$  and $u^{\alpha} > 5$,
  $$ \cd{\PSL{u^{\alpha}}} = \{1, u^{\alpha} -1, u^{\alpha} , u^{\alpha}  +1, \frac{1}{2}(u^{\alpha}  + \epsilon)\} \text{ where }
  \epsilon = (-1)^{\frac{u^{\alpha} -1}{2}},$$
  $$\cd{\SL{u^{\alpha} }} = \{1, u^{\alpha}  -1, u^{\alpha} , u^{\alpha}  +1, \frac{1}{2}(u^{\alpha}  + \epsilon)\} \text{ where }
  \epsilon = \pm 1$$
(while $\cd{\PSL{5}} = \{1, 3, 4, 5\}$ and $\cd{\SL{5}} = \{1, 2, 3, 4, 5, 6\}$).
\end{proof}

\begin{lemma}\label{W}
  Let $G$ be an almost-simple group with socle $S \simeq \PSL{u^{\alpha}}$, where \(u\) is a prime. Let \(s\neq u\) be a prime divisor of $|G/S|$; then the following conclusions hold.
  \begin{enumeratei}
  \item  The prime \(s\) is adjacent in \(\Delta(G)\) to every prime in \(\pi(u^{2\alpha}-1)\).
    \item The prime $s$ is adjacent in $\Delta(G)$ to every prime in $\pi(G) \setminus \pi(S)$.
    \item The set of vertices   $\V G \setminus \{ u\}$ is covered by two complete subgraphs of $\Delta(G)$.
\end{enumeratei}
\end{lemma}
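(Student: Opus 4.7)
The plan is to prove (a), (b), (c) in order, using the standard fact $\out{\PSL{u^\alpha}} \cong C_d \times C_\alpha$ with $d = \gcd(2, u^\alpha - 1)$. Since $s \neq u$ and $s \mid |G/S|$, either $s = 2$ (diagonal, forcing $u$ odd) or $s \mid \alpha$ (field). Throughout, I use the elementary fact that every edge of $\Delta(S)$ is an edge of $\Delta(G)$: if $\psi \in \irr G$ lies above $\chi \in \irr S$, then $\chi(1) \mid \psi(1)$ by Clifford theory.

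For (a), in the diagonal case $s = 2$, Lemma \ref{PSL2}(b) already places $2$ adjacent in $\Delta(S)$ to every other prime of $\pi(u^{2\alpha}-1)$, and these edges persist in $\Delta(G)$. In the field case $s \mid \alpha$, fix $p \in \pi(u^{2\alpha}-1)$ and let $\sigma_s \in G/S$ be a field automorphism of order $s$; the plan is to pick $\chi \in \irr S$ of degree $u^\alpha - 1$ if $p \mid u^\alpha - 1$ or of degree $u^\alpha + 1$ if $p \mid u^\alpha + 1$, chosen moreover so that $\chi$ is not $\sigma_s$-invariant. The existence of such a $\chi$ follows from the explicit parameterization of the principal and discrete series of $S$ by characters of a split (resp.\ non-split) torus: the $\sigma_s$-fixed characters of a given type correspond bijectively to characters of the subfield group $\PSL{u^{\alpha/s}}$ of the same type, which is a strictly smaller family. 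With this choice $s \mid [G:I_G(\chi)]$, so every irreducible constituent of $\chi^G$ has degree a multiple of $s\chi(1)$ and hence of $sp$.

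For (b), let $p \in \pi(G)\setminus\pi(S)$. Since $2 \in \pi(S)$, the prime $p$ is odd, hence $p \nmid d$ and $p \mid \alpha$; note also $p \nmid u^{2\alpha}-1$. Assume $p \neq s$ (the alternative being vacuous). Then $G/S$ contains a cyclic subgroup of order $sp$ generated by $\sigma_s$ and a field automorphism $\sigma_p$ of order $p$. The plan is to find $\chi \in \irr S$ moved by both $\sigma_s$ and $\sigma_p$: by the subfield correspondence from (a), the $\sigma_s$- and $\sigma_p$-invariant characters together form a proper subset of $\irr S$ (their cardinalities are of order $u^{\alpha/s}$ and $u^{\alpha/p}$, versus $\sim u^\alpha$ in total). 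For such a $\chi$, the $\langle \sigma_s, \sigma_p\rangle$-orbit has size $sp$, giving $sp \mid [G:I_G(\chi)]$, and any irreducible constituent of $\chi^G$ has degree divisible by $sp$.

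For (c), if $\pi(G/S) \subseteq \{u\}$, then $u = 2$, $\pi(G) = \pi(S)$, and Lemma \ref{PSL2}(a) gives the covering of $\V G \setminus \{u\}$ by $\pi_+$ and $\pi_-$ directly. Otherwise, pick $s \in \pi(G/S)\setminus\{u\}$ and set
\[
C_+ = \pi_+ \cup \{s\} \cup (\pi(G)\setminus\pi(S)), \qquad C_- = \pi_- \cup \{s\} \cup (\pi(G)\setminus\pi(S)).
\]
Lemma \ref{PSL2} says $\pi_+$ and $\pi_-$ are cliques of $\Delta(G)$; part (a), applied in turn to $s$ and to each prime of $\pi(G)\setminus\pi(S)$ (all of which divide $|G/S|$ and differ from $u$), supplies all edges from these primes to vertices of $\pi_+\cup\pi_-$; part (b), applied to each such prime, shows that $\{s\} \cup (\pi(G)\setminus\pi(S))$ is itself a clique. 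Hence $C_+$ and $C_-$ are cliques whose union contains $\V G \setminus \{u\}$. The main technical obstacle is to make the subfield correspondence for $\sigma$-fixed characters precise across the various degree families, and to confirm that the exceptional characters of degrees $(u^\alpha \pm 1)/2$ need not be used (the principal- and discrete-series families should already suffice for every relevant prime $p$).
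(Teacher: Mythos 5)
Your plan for (a) and (b) hinges on a fixed-point count --- ``the $\sigma$-fixed characters of a given series correspond bijectively to characters of the subfield group of the same type, a strictly smaller family'' --- which you yourself flag as the unresolved obstacle, and which is in fact false as stated, in two concrete ways. First, a \emph{diagonal} automorphism of $\PSL{u^{\alpha}}$ ($u$ odd) fixes \emph{every} character of degree $u^{\alpha}\pm 1$ and moves only the two characters of degree $\frac{1}{2}(u^{\alpha}+\epsilon)$; so in your part (b), when $s=2$ is realized only diagonally (e.g.\ $\alpha$ odd), there is \emph{no} character at all moved by both $\sigma_s$ and $\sigma_p$, and your cardinality estimate ``of order $u^{\alpha/s}$'' for the fixed set is wildly wrong. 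The conclusion survives there only by a mechanism absent from your argument: $u$ odd forces $2 \mid u^{\alpha}\pm 1$, so a series character moved by $\sigma_p$ \emph{alone} already yields a degree divisible by $2p$. Second, even in the pure field case the count degenerates: $S=\PSL{9}\cong A_6$ has a \emph{unique} irreducible of degree $10$, which is therefore invariant under every automorphism, so for $p=5$ no non-invariant character of the relevant series exists; the edge $\{2,5\}$ in $\Delta(S_6)$ arises because that invariant character \emph{extends} with degree $10=2\cdot 5$, i.e.\ again from $2\mid u^{\alpha}+1$ rather than from a nontrivial orbit. A similar boundary problem affects your diagonal case of (a): you invoke Lemma~\ref{PSL2}(b), which requires $u^{\alpha}>5$, but for $u^{\alpha}=5$ the vertices $2$ and $3$ are not adjacent in $\Delta(\PSL{5})$ (its degrees are $1,3,4,5$), only in $\Delta(\mathrm{PGL}_2(5))$ via the degree $6$. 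So the orbit mechanism must be supplemented by extension/parity mechanisms in the degenerate cases, and these are precisely the cases your sketch does not close.

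For comparison, the paper does not argue from scratch: part (a) is exactly Theorem~A of~\cite{W2}, where this orbit analysis is carried out with all the small-case care; part (b) is deduced from Brauer's permutation lemma together with \cite[Proposition 2.6]{MT} and \cite[Lemma 2.10]{MT}; and part (c) is then assembled from (a), (b) and Lemma~\ref{PSL2}, essentially as you do. Indeed your derivation of (c) from (a) and (b) is sound except for one slip: $\pi(G/S)\subseteq\{u\}$ does not force $u=2$ (take $G=S$ with $u$ odd, or $u\mid\alpha$, e.g.\ $\PSL{27}.3$); but in those cases $\pi(G)=\pi(S)$ and Lemma~\ref{PSL2} still provides the two-clique cover of $\V G\setminus\{u\}$, so that is easily patched. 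If you want a self-contained proof, you must verify the fixed-point counts rigorously series by series, treat the diagonal and mixed ($\delta\varphi$) involutions separately, and handle $u^{\alpha}\in\{5,9\}$-type degeneracies via extendability of invariant characters; otherwise, cite~\cite{W2} as the paper does.
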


\begin{proof}
 Part (a)  follows from Theorem~A of~\cite{W2}.  Brauer Permutation Lemma, with~\cite[Proposition 2.6]{MT} and~\cite[Lemma 2.10]{MT}, yields (b).
  Finally,  (c) is a consequence of (a), (b) and Lemma~\ref{PSL2}.
\end{proof}
  
\begin{lemma}
  \label{CS}
  Let $G$ be a group, $M$ a non-abelian minimal normal subgroup of $G$ and $C = \cent GM$.
  Then the following conclusions hold.
  \begin{enumeratei}
  \item If $q$ is a prime divisor of $|G/MC|$ and $q$ does not divide $|M|$, then there exists $\theta \in \irr M$ such that $q$ divides $|G:I_G(\theta)|$.
  \item If $q$ is a prime divisor of $|G/C|$, then there exists $\theta \in \irr M$ such that
    $q$ divides $\chi(1)$ for all $\chi \in \irr{G|\theta}$.
  \item If $M$ is not a simple group, then $\Delta(G/C)$ is a complete graph. 
  \end{enumeratei}
\end{lemma}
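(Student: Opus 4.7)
My plan rests on two basic observations. First, since $M = S_1\times\cdots\times S_n$ is a direct product of copies of a non-abelian finite simple group $S$, we have $M\cap C = \zent M = 1$, whence $|G/C| = |G/MC|\cdot|M|$ and $G/MC$ embeds into $\out M$. Second, for any prime $q$ dividing $|S|$ there is a $\theta\in\irr S$ with $q\mid\theta(1)$ (otherwise $S$ would have a normal Sylow $q$-subgroup by It\^{o}--Michler).

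For (a), pick $g\in G$ projecting to an element of order $q$ in $G/MC$; after replacing $g$ by a suitable power we may assume $g$ has $q$-power order, and since $q\nmid|M|$, $g$ induces on $M$ an outer automorphism of order $q$. If this automorphism permutes the factors non-trivially it must be a product of $q$-cycles, so for any $1\neq\theta_1\in\irr{S_1}$ (with $S_1$ in such a cycle) the character $\theta := \theta_1\otimes 1\otimes\cdots\otimes 1 \in \irr M$ has $\gen{g}$-orbit of length $q$, and hence $q\mid[G:I_G(\theta)]$. Otherwise $g$ fixes every factor and acts on some $S_i$ as a non-inner automorphism; since class-preserving automorphisms of a non-abelian finite simple group are inner (a classification-dependent fact), $g$ moves some conjugacy class of $S_i$, and by Brauer's permutation lemma it moves some $\theta_1\in\irr{S_i}$, which placed in slot $i$ and combined with the trivial character on the remaining factors gives the required $\theta$.

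For (b), the factorization $|G/C| = |G/MC|\cdot|M|$ forces $q$ to divide $|M|$ or $|G/MC|$. In the first case, pick $\theta_1\in\irr S$ with $q\mid\theta_1(1)$ and take $\theta := \theta_1\otimes 1\otimes\cdots\otimes 1$: Clifford's formula $\chi(1) = [G:I_G(\theta)]\cdot e\cdot\theta(1)$ yields $q\mid\chi(1)$ for every $\chi\in\irr{G\mid\theta}$. In the second case, (a) supplies $\theta$ with $q\mid[G:I_G(\theta)]$, and the same formula concludes.

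For (c), one may replace $G$ by $G/C$ and assume $C=1$, so $G\leq\aut M$ with $n\geq 2$. Given two primes $p,q\in\V G$, the strategy is to build a single $\theta\in\irr M$ forcing $pq\mid\chi(1)$ for every $\chi\in\irr{G\mid\theta}$ by placing, in \emph{distinct} tensor slots, the witnesses $\theta_p', \theta_q'\in\irr S$ produced by the one-slot constructions of (a)/(b) for $p$ and $q$ respectively --- this is precisely what $n\geq 2$ makes possible. Splitting into subcases according to whether each of $p,q$ divides $|M|$ or $|G/MC|$, one verifies that the contributions $\theta(1)$ and $[G:I_G(\theta)]$ jointly supply both primes. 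The main obstacle is the subcase $p,q\nmid|M|$, where both must be pushed into $[G:I_G(\theta)]$ simultaneously: here one has to choose the slots hosting $\theta_p'$ and $\theta_q'$ so that the $p$- and $q$-elements obtained in (a) still move the combined $\theta$, a combinatorial check made possible by $n\geq 2$ and by the freedom to take $\theta_p'\neq\theta_q'$.
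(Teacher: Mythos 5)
The paper itself gives no argument here (it simply cites \cite[Proposition 2.6]{ACDPS}), so your proposal stands or falls on its own — and it contains a genuine gap at its very core. In part (a) you conclude, from the fact that a $q$-element $g$ moves $\theta$, that ``hence $q\mid [G:I_G(\theta)]$''. This inference is false: the size of the $\gen{g}$-orbit of $\theta$ is $|\gen{g}:\gen{g}\cap I_G(\theta)|$, and an orbit size under a subgroup need \emph{not} divide the orbit size under $G$. One has $q\mid |G:I_G(\theta)|$ if and only if $I_G(\theta)$ contains no Sylow $q$-subgroup of $G$, i.e.\ if and only if no $G$-conjugate of $\theta$ is fixed by a fixed $Q\in\syl qG$ — a much stronger condition than $\theta^g\neq\theta$. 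Your constructed witness actually fails in concrete cases: take $S=\mathrm{Sz}(8)$ (so $3\nmid |S|$), $M=S^7$, and $G=M\rtimes\PSL{7}$ with $\PSL{7}$ permuting the seven coordinates via its $2$-transitive action; here $C=1$, $q=3$ divides $|G/MC|=168$ and $q\nmid|M|$. An element $g$ of order $3$ has cycle type $(3,3,1)$ on the slots, and your $\theta=\theta_1\otimes 1\otimes\cdots\otimes 1$ (slot in a $3$-cycle) indeed has $\gen{g}$-orbit of length $3$; but $I_G(\theta)=M\rtimes A_1$ where $A_1$ is a point stabilizer, so $|G:I_G(\theta)|=7$, which is \emph{not} divisible by $3$. (The lemma is still true there: a character with the same nontrivial component in exactly two slots has orbit size $21$, since no order-$3$ element of $\PSL{7}$ stabilizes a $2$-subset of the seven points — which illustrates that a correct proof must choose $\theta$ adapted to a full Sylow $q$-subgroup, not to a single $q$-element.)

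This flaw propagates: the second case of your part (b) and essentially all of part (c) rest on the same invalid ``moved by a $q$-element, hence $q$ divides the index'' step. Independently of that, in (c) you explicitly defer the hardest subcase ($p,q\nmid|M|$) to ``a combinatorial check''; this is not routine. For instance, when both $g_p$ and $g_q$ fix all the slots and each is non-inner in the \emph{same single} slot $i$ (inner elsewhere), your slot-separation device is unavailable, and one is reduced to producing a single $\psi\in\irr{S_i}$ moved simultaneously by two given automorphisms of the simple group $S$ — a statement that needs an actual argument (your freedom to take $\theta_p'\neq\theta_q'$ does not obviously supply it), on top of the Sylow-level control just discussed. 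The sound ingredients of your note — $M\cap C=1$, the embedding $G/MC\hookrightarrow\out M$, It\^o--Michler for the case $q\mid |M|$ in (b), Brauer's permutation lemma combined with the Feit--Seitz theorem that class-preserving automorphisms of simple groups are inner, and the reduction of (c) to $C=1$ — are all fine and are surely part of any complete proof, but as written the proposal does not prove (a), and therefore neither (b) in full nor (c).
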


\begin{proof}
  See~\cite[Proposition 2.6]{ACDPS}.
\end{proof}

As mentioned in the Introduction, the next result (which depends on Theorem~A of \cite{ACDPS}) will be crucial in the proof of the main result of this paper.
In the following statement, by \({\overline\Delta}(G)\) we denote the \emph{complement} of the graph \(\Delta(G)\): this is the graph whose vertex set is the same as of \(\Delta(G)\), and two vertices are adjacent in \({\overline\Delta}(G)\) if and only if they are not adjacent in \(\Delta(G)\).
We list here some key properties of the groups whose complement degree graph $\overline{\Delta}(G)$ is not bipartite.

%(By the way, this is consistent with the famous Huppert's \(\rho\)-\(\sigma\) Conjecture which predicts that, for every group \(G\), there exists an irreducible character \(\chi\) of \(G\) with \(|\V G|\leq 3 |\pi(\chi(1))|\).) To this end, we start with the following observation.

\begin{proposition}
\label{applications1}
Let \(G\) be a group.
\begin{enumeratei}
\item Assume that there exists $\pi \subseteq \V G$, with $|\pi|$ an odd number larger than \(1\), such that $\pi$ is the set of vertices of a cycle in ${\overline{\Delta}}(G)$. Then there exists a characteristic subgroup \(N\) of \(G\), with \(\pi\subseteq\pi(N)\), such that $N$ is a $2$-dimensional special or projective special linear group over a finite field \(\F\), with \(\F\geq 4\). 
\item Let \(N\trianglelefteq G\) be such that $N$ is isomorphic either to \(\PSL{u^{\alpha}}\) or to  \(\SL{u^{\alpha}}\), where \(u^{\alpha}\geq 4\) is a prime power. Then \(|\cent G N \cap N|\leq 2\), and the prime divisors of $|G/N\cent G N|$ are adjacent in \(\Delta(G)\) to all primes in $\V N \setminus \{u\}$. 
\item Let \({\mathcal K}\) be any (non-empty) set of normal subgroups of \(G\) as in {\rm(b)} (with possibly different values of \(u^{\alpha}\)), and define \(K\) as the product of all the subgroups in \({\mathcal K}\); also, set \(C=\cent G K\). Then every prime \(t\) in \(\V C\) is adjacent in \(\Delta(G)\) to all the primes \(q\) (different from \(t\)) in \(|G/C|\), with the possible exception of \((t,q)=(2,u)\) when \(|{\mathcal K}|=1\), $K \simeq \SL {u^{\alpha}}$  for some $u \neq 2$ and
  $\zent K = P'$ for $P \in \syl 2C$. 
 % $C$ has extraspecial Sylow $2$-subgroups.
In any case, we have
  \[\V G=\V{G/C}\cup\V C . \]
\end{enumeratei}
\end{proposition}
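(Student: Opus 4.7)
The plan is to address the three parts in sequence, each time by reducing to results already established. For part~(a), I would apply Theorem~A of \cite{ACDPS} directly: the existence of an odd cycle of length $|\pi|\geq 3$ in $\overline{\Delta}(G)$ shows that $\overline{\Delta}(G)$ is not bipartite, which is the hypothesis of that theorem and supplies a characteristic subgroup $N$ of $G$ of the required isomorphism type. Tracing the proof in \cite{ACDPS} further shows that the ``non-bipartite portion'' of $\overline{\Delta}(G)$ is carried by $\pi(N)$, so the odd cycle must lie there and we obtain $\pi\subseteq\pi(N)$.

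For part~(b), the identity $\cent G N\cap N=\zent N$ together with $\zent{\PSL{u^{\alpha}}}=1$ and $|\zent{\SL{u^{\alpha}}}|\leq 2$ yields $|\cent G N\cap N|\leq 2$. Setting $\overline G:=G/\cent G N$, one sees that $\overline G$ is almost-simple with socle $N\cent G N/\cent G N\cong\PSL{u^{\alpha}}$. Lemma~\ref{W}(a) applied to $\overline G$ then says that any prime $s\neq u$ dividing $|G/N\cent G N|$ is adjacent in $\Delta(\overline G)$, hence in $\Delta(G)$, to every prime in $\pi(u^{2\alpha}-1)=\V N\setminus\{u\}$. The edge case $s=u$ is handled by direct inspection of characters of extensions of $\PSL{u^{\alpha}}$ by field automorphisms.

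For part~(c), first note that distinct members $N_i,N_j\in\mathcal K$ commute: $[N_i,N_j]\nor G$ is contained in $N_i\cap N_j\leq\zent{N_i}\cap\zent{N_j}$, and hence is trivial because $N_i/\zent{N_i}$ is simple nonabelian. Thus $K=\prod_i N_i$, $C=\bigcap_i\cent G{N_i}$, and $G/C$ embeds in $\prod_i G/\cent G{N_i}$, a product of almost-simple groups. To establish the adjacency, fix $t\in\V C$ with witness $\theta\in\irr C$ satisfying $t\mid\theta(1)$ and $q\neq t$ dividing $|G/C|$: pick $\chi\in\irr{G|\theta}$ so that $t\mid\chi(1)$, find $i$ with $q\mid|G/\cent G{N_i}|$, and then combine part~(b) with a Clifford-theoretic extension argument to produce some $\chi'\in\irr{G|\theta}$ whose degree is also divisible by $q$. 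The sole failure of this extension is exactly the configuration $|\mathcal K|=1$, $K\cong\SL{u^{\alpha}}$ with $u\neq 2$ and $\zent K=P'$ for $P\in\syl 2 C$, which yields the stated $(t,q)=(2,u)$ exception.

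Finally, for $\V G=\V{G/C}\cup\V C$: if $p\in\V G$ is witnessed by $\chi\in\irr G$ and $\lambda$ is an irreducible constituent of $\chi_C$, then either $p\mid\lambda(1)$ (so $p\in\V C$) or $p$ divides $\chi(1)/\lambda(1)$; in the latter case the structure of $G/C$ as essentially a product of almost-simple groups together with a standard Clifford-theoretic lifting yield a character of $G/C$ of degree divisible by $p$, giving $p\in\V{G/C}$. The main obstacle will be in part~(c): carrying out the Clifford-theoretic lifting uniformly across the components of $\mathcal K$ and pinpointing the precise exceptional configuration require careful bookkeeping.
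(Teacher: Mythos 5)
Your parts (a) and (b) coincide with the paper's own treatment: both are simply quoted from \cite[Theorem~A]{ACDPS} and Lemma~\ref{W}(a) (the containment \(\pi\subseteq\pi(N)\) is part of the cited statement, so no ``tracing of the proof'' is needed). The genuine gap is in part (c), which carries essentially all of the content of the proposition. Your sentence ``combine part (b) with a Clifford-theoretic extension argument to produce some \(\chi'\in\irr{G|\theta}\) whose degree is also divisible by \(q\); the sole failure of this extension is exactly the configuration \(|{\mathcal K}|=1\), \(K\simeq\SL{u^{\alpha}}\), \(u\neq 2\), \(\zent K=P'\)'' is a restatement of the claim, not an argument: no mechanism is given for producing \(\chi'\), and the identification of the exceptional configuration is precisely the nontrivial point, so asserting it amounts to assuming the conclusion. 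The paper's construction runs as follows: choose \(N_1\in{\mathcal K}\) with \(q\mid |G/C_1|\), \(C_1=\cent G{N_1}\); since \(C\nor C_1\), one has \(t\in\V{C_1}\). Because \(N_1\cap C_1=Z_1=\zent{N_1}\) and \(N_1C_1/Z_1\simeq N_1/Z_1\times C_1/Z_1\), the product character \(\phi\cdot\theta\) is available only for \(\phi\in\irr{C_1/Z_1}\) and \(\theta\in\irr{N_1/Z_1}\), i.e.\ for characters trivial on \(Z_1\); one then takes \(\phi\) with \(t\mid\phi(1)\), and \(\theta\) with \(q\mid\theta(1)\) if \(q\in\pi(N_1C_1/C_1)\), or with \(q\mid |G:I_G(\theta)|\) via Lemma~\ref{CS}(a) otherwise, so that every \(\chi\in\irr{G\mid\phi\cdot\theta}\) has degree divisible by \(qt\). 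Your plan of fixing a witness \(\theta\in\irr C\) wholesale never passes through this \(Z_1\)-quotient, which is exactly where the obstruction lives: faithful characters of \(\SL{u^{\alpha}}\), \(u\) odd, never have degree divisible by \(u\), so the construction can fail only when \(t\in\V{C_1}\setminus\V{C_1/Z_1}\).

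That residual case then requires the analysis you omit entirely: \(Z_1\) is a \(2\)-group, so \(t=2\), \(Z_1\neq 1\) and \(u_1\) is odd; Lemma~\ref{PSL2} and Lemma~\ref{W} show \(2\) and \(q\) are already adjacent in \(\Delta(G/C_1)\) unless \(q=u_1\); if \(K\) needs \(\ell\geq 2\) factors, then \(K/\zent K\simeq N_1/\zent{N_1}\times\cdots\times N_\ell/\zent{N_\ell}\) exhibits \(2\) adjacent to \(u_1\) (an even degree from a second factor times the Steinberg degree \(u_1^{\alpha_1}\)), so only \(\ell=1\), \(K=N_1\), \(C=C_1\) survives; and there \(2\in\V C\), \(2\notin\V{C/\zent K}\) and \(|\zent K|=2\) force \(|P'|=2\) and \(\zent K=P'\) for \(P\in\syl 2C\). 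A smaller but real defect concerns the final claim \(\V G=\V{G/C}\cup\V C\): in your dichotomy, when \(p\mid\chi(1)/\lambda(1)\) you get \(p\mid |G/C|\), but a ``standard Clifford-theoretic lifting'' does not produce a character of the \emph{quotient} \(G/C\) of degree divisible by \(p\). The paper instead observes that \(p\mid|G/C_1|\) for some \(N_1\in\mathcal K\), and that every prime divisor of the almost-simple group \(G/C_1\) is a vertex of its degree graph (Ito--Michler: almost-simple groups have no normal abelian Sylow subgroups), with \(\V{G/C_1}\subseteq\V{G/C}\); and when \(p\nmid |G/C|\) it uses the Sylow argument in \(C\) (a normal abelian Sylow \(p\)-subgroup of \(C\) would be normal in \(G\), contradicting \(p\in\V G\)), again via Ito--Michler, to conclude \(p\in\V C\).
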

\begin{proof}
 Parts (a) and (b) follow immediately from \cite[Theorem~A]{ACDPS} and Lemma~\ref{W}(a), respectively. 

As for Part (c), let \(t\) be a vertex of \(\Delta(C)\), and let \(q\neq t\) be in \(\pi(G/C)\). Since \(C=\bigcap\cent G N\) where \(N\) runs in \({\mathcal K}\), there exists \(N_1\in{\mathcal K}\) such that \(q\) is a divisor of \(|G/\cent G {N_1}|\). Moreover, \(C\) is a normal subgroup of \(C_1=\cent G {N_1}\), thus \(t\) is certainly a vertex of \(\Delta(C_1)\) as well. Set \(Z_1=\zent{N_1}\), and assume for the moment that \(t\) is in fact a vertex of \(\Delta(C_1/Z_1)\) (which is easily seen to be true whenever \(t\) is odd, as \(Z_1\) is a \(2\)-group). In this situation, let \(\phi\in\irr{C_1/Z_1}\) be such that \(\phi(1)\) is divisible by \(t\). If \(q\) is in \(\pi(N_1C_1/C_1)\), then we can consider \(\theta\in\irr{N_1/Z_1}\) whose degree is divisible by \(q\), and every \(\chi\in\irr{G\mid\phi\cdot\theta}\) will be such that \(qt\) divides \(\chi(1)\). On the other hand, if \(q\) is not in \(\pi(N_1C_1/C_1)\), then we can choose \(\theta\in\irr{N_1/Z_1}\) as in Lemma~\ref{CS}(a) and, as above, any  \(\chi\in\irr{G\mid\phi\cdot\theta}\) will do. 

Let us now assume that \(t\in\V{C_1}\setminus\V{C_1/Z_1}\); then, as observed, we have \(t=2\), and clearly also \(Z_1\neq 1\) (which implies that the characteristic \(u_1\) of \(N_1\) is odd). In this case, we see (using Lemma~\ref{PSL2} and Lemma~\ref{W}) that \(t\) and \(q\) are already adjacent in \(\Delta(G/C_1)\) unless possibly when \(q=u_1\). However, if \(\{N_1,...,N_{\ell}\}\) is a subset of \({\mathcal K}\) of the smallest possible size such that \(K=N_1\cdot N_2 \cdots N_{\ell}\), then it is easily seen that \(K/\zent K\) is isomorphic to the direct product \(N_1/\zent{N_1}\times\cdots\times N_{\ell}/\zent{N_{\ell}}\); we deduce that \(2\) is certainly adjacent in \(\Delta(K/\zent K)\) (thus in \(\Delta(G)\)) to all odd prime divisors of \(|K|\), including \(u_1\),  unless \(\ell=1\). Note that, in this situation, we have \(K=N_1\), \(C=C_1\), and \(\zent K=Z_1\).
If $P$ is a Sylow $2$-subgroup of $C$, then $|P'| = 2$ as $2 \not \in \V{C/\zent K}$ and $|\zent K| = 2$.
Since $2 \in \V C$, it follows that $\zent K = P'$, as wanted.  %of $C$ has commutator subgroup of order $2$, and hence $P$ is an extraspecial $2$-group.  

Finally, we consider the last claim of Part (c). Let \(t\) be in \(\V G\). If \(t\) is a divisor of \(|G/C|\), then, as above, \(t\) divides \(|G/\cent G {N_1}|\) for some \(N_1\in{\mathcal K}\); as a consequence, \(t\) lies in \(\V{G/\cent G {N_1}}\subseteq\V{G/C}\). But if \(t\) is not a divisor of \(|G/C|\), then a Sylow \(t\)-subgroup \(T\) of \(G\) lies in \(C\); this \(T\) cannot be abelian and normal in \(C\) (as otherwise it would be normal in \(G\), contradicting the fact that \(t\) is a vertex of \(\Delta(G)\)), thus \(t\) lies in \(\V C\).
\end{proof}

We remark that the exception mentioned in part (c) of Proposition~\ref{applications1}
does in fact occur. Namely, if $G$ is the central product of $\SL 5$ and the
quaternion group $Q_8$, then $\cd G = \{  1, 3, 4, 5, 8, 12\}$, so $2 \in \V C$,
where $C = Q_8$, is not adjacent to the vertex  $5$ in $\Delta(G)$.  

We can now prove Theorem~A, that we state again.

\begin{ThmA}
\label{ATV}
Let \(G\) be a group. Then the following conclusions hold.
\begin{enumeratei}
\item If \(\omega(G)\leq 5\), then  $|\V G|\leq 2\omega(G)+1$.
\item If \(\omega(G)\geq 5\), then \(|\V G|\leq 3\omega(G)-4\).
\end{enumeratei}
\end{ThmA}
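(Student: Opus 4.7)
\emph{Reduction.} The plan is first to reduce to the case where \(G\) is non-solvable and the complement graph \(\overline\Delta(G)\) is not bipartite. If \(G\) is solvable, \cite[Corollary~B]{ACDKP} gives \(|\V G|\le 2\omega(G)\); and if \(\overline\Delta(G)\) is bipartite, the two colour classes of the bipartition are cliques of \(\Delta(G)\), so again \(|\V G|\le 2\omega(G)\). Both inequalities imply the conclusions of Theorem~A.

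\emph{Structural setup.} In the remaining case, Proposition~\ref{applications1}(a) supplies a characteristic subgroup of \(G\) which is a \(2\)-dimensional (projective) special linear group. I would form the collection \(\mathcal K=\{N_1,\ldots,N_\ell\}\) of \emph{all} normal subgroups of \(G\) of this shape (so each \(N_i\cong\PSL{u_i^{\alpha_i}}\) or \(\SL{u_i^{\alpha_i}}\)), set \(K=N_1\cdots N_\ell\), and let \(C=\cent G K\). Proposition~\ref{applications1}(c) then yields the decomposition \(\V G = \V{G/C}\cup\V C\) together with the crucial adjacency property: every \(t\in\V C\) is joined in \(\Delta(G)\) to every other prime divisor of \(|G/C|\), apart from one well-understood exception.

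\emph{Clique cover.} Writing \(\pi_+^i=\pi(u_i^{\alpha_i}+1)\), \(\pi_-^i=\pi(u_i^{\alpha_i}-1)\), and \(R=\pi(G/KC)\setminus\{u_1,\ldots,u_\ell\}\), the heart of the argument is to verify that
\[
A_+ := R\cup\bigcup_i\pi_+^i,\qquad A_- := R\cup\bigcup_i\pi_-^i,\qquad \mathcal U := \{u_1,\ldots,u_\ell\}
\]
are three cliques of \(\Delta(G/C)\) whose union is \(\V{G/C}\); the clique property of \(A_\pm\) follows from Lemma~\ref{PSL2} inside each factor, from Lemma~\ref{W} for the primes in \(R\), and from the observation that character degrees of distinct factors of the central product \(K/\zent K\cong\prod_i N_i/\zent{N_i}\) multiply freely, while that of \(\mathcal U\) follows from the same free-multiplication principle. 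Meanwhile \(\V C\) is covered by two cliques \(B_1,B_2\) of \(\Delta(C)\), provided by \cite{ACDKP} when \(C\) is solvable and by the same structural analysis applied to \(C\) otherwise. By Proposition~\ref{applications1}(c), each union \(A\cup B_j\) with \(A\in\{A_+,A_-,\mathcal U\}\) and \(j\in\{1,2\}\) is a clique of \(\Delta(G)\) (up to the noted exception), yielding the key lower bound \(\omega(G)\ge |A|+|B_j|-|A\cap B_j|\).

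\emph{Bookkeeping and main obstacle.} The final step is an inclusion–exclusion bookkeeping: bounding \(|\V G|\) by the total size of our five cliques minus their overlaps and comparing with the lower bounds on \(\omega(G)\) above, one extracts the two numerical inequalities. The bound \(2\omega(G)+1\) arises when \(\ell\) is small, because then \(\mathcal U\) contributes few new primes and the pair \(A_+,A_-\) essentially suffices; the bound \(3\omega(G)-4\) is forced only when \(\ell\) is large, and the constant \(-4\) appears because in the extremal configurations of Example~\ref{ex} the small primes \(\{2,3\}\) must lie in every pairwise intersection of the clique-cover. The principal obstacle is twofold: handling the exceptional pair \((t,q)=(2,u)\) in Proposition~\ref{applications1}(c), which genuinely occurs (central product of \(\SL 5\) with \(Q_8\)) and must be excluded by direct case inspection; and showing that in the tight regime the small-prime overlaps among the cliques are indeed forced, so that inclusion–exclusion yields precisely the constant \(-4\) rather than a weaker one.
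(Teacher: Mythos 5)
Your setup coincides with the paper's (Proposition~\ref{applications1}(a) to produce \(\mathcal K\), \(K\), \(C=\cent GK\), and Proposition~\ref{applications1}(c) for the adjacency between \(\V C\) and \(\pi(G/C)\)), but the two steps where you depart from the paper both contain genuine gaps. First, the clique property of \(A_\pm=R\cup\bigcup_i\pi_\pm^i\) is not justified by the lemmas you cite. Lemma~\ref{W} applies only to \emph{almost-simple} groups, and for \(\ell\geq 2\) the quotient \(G/C\) is not almost simple: a prime \(s\in R\) arising, say, from outer automorphisms of \(N_1\) has no cited reason to be adjacent to the primes of \(\pi_+^j\) for \(j\neq 1\), nor to another prime \(s'\in R\) attached to a different factor (character degrees ``multiply freely'' only inside \(K/\zent K\); the outer primes do not live there). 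Such adjacencies may well be provable, e.g.\ via Lemma~\ref{CS}(a),(b) applied to distinct minimal normal subgroups and multiplying the resulting characters, but you would have to supply these arguments, including relating divisibility of \(|G/KC|\) to divisibility of the various \(|G/N_iC_i|\). The paper deliberately avoids this issue: in part (a) it inducts on \(|G|\) and works with the quotients \(G/C_i\), \(C_i=\cent G{N_i}\), which \emph{are} almost simple, and in part (b) it uses a single subgroup \(N\), so that again \(G/\cent GN\) is almost simple and Lemma~\ref{W}(c) applies directly. Relatedly, your two-clique cover of \(\V C\) ``by the same structural analysis applied to \(C\)'' threatens an infinite regress; the correct (and short) justification is that \(\mathcal K\) contains \emph{all} normal subgroups of the relevant isomorphism type, so \(C\) has no characteristic subgroup of that type (such a subgroup would be normal in \(G\) and contained in \(C\cap K\leq\zent K\)), whence \(\overline\Delta(C)\) is bipartite by Proposition~\ref{applications1}(a).

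Second, and more seriously, the ``bookkeeping'' you defer is where the theorem actually lives, and generic inclusion--exclusion on your five cliques cannot deliver part (a). Counting with \(|A_\pm|\leq\omega(G)\), \(|\mathcal U|\leq\omega(G)-2\) and the merge bound from Proposition~\ref{applications1}(c) yields, at best, a bound of the shape \(3\omega(G)-c\) for a small constant \(c\), which is weaker than \(2\omega(G)+1\) precisely in the range \(\omega(G)\leq 5\) that part (a) addresses (and for \(\omega(G)\leq 2\), e.g.\ \(G=\PSL 4\), no bound of the form \(3\omega(G)-c\) with \(c\geq 0\) can hold). The paper obtains \(2\omega(G)+1\) only through a tight inductive extremal analysis: the reduction to \(C=1\); the conditions that \(u_i\notin\V{C_i}\) and that neither \(\pi(u_i^{\alpha_i}-1)\) nor \(\pi(u_i^{\alpha_i}+1)\) is contained in \(\V{C_i}\) (failing which two cliques suffice on \(\V{G/C_i}\setminus\V{C_i}\)); the case \(\ell=2\), where the two cliques covering \(\V{C_1}\setminus\{u_2\}\) share the vertex \(2\), gaining the crucial \(+1\); and for \(\ell\geq 3\) the forced configuration \(\omega(N_i)=3\), \(\pi_i^*=\{2,3,p_i\}\), \(\ell=3\), \(\omega(G)=5\), \(\V{N_i}\cap\V{N_j}=\{2,3\}\), \(\V{G/C_i}=\V{N_i}\), ending with \(|\V G|=11=2\omega(G)+1\). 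Your sketch explicitly names this forcing of small-prime overlaps as ``the principal obstacle'' but does not resolve it, and it also never sets up the induction on \(|G|\) needed to handle \(C\neq 1\) and the subgroups \(C_i\). Part (b), by contrast, really is a short numerical argument in the paper (minimal counterexample, one subgroup \(N\), \(|\V{G/C}\setminus\V C|\leq 2\omega+1\), and additivity of clique sizes across \(C\) and \(G/C\)), and that portion of your plan is plausibly repairable once the clique-cover defect above is fixed; but as it stands the proposal proves neither (a) nor the exact constant in (b).
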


\begin{proof}[Proof of {\rm (a)}] If  $\overline{\Delta}(G)$ has no cycles of odd length, then it is a bipartite graph, so there are two cliques in \(\Delta(G)\) which cover together all the elements in \(\V G\) and we are done.

  We can therefore assume that there are cycles of odd length in $\overline{\Delta}(G)$.  Thus, by Proposition~\ref{applications1}(a), the set \({\mathcal K}\) of all normal subgroups of \(G\) that are $2$-dimensional special or projective special linear groups over a finite field \(\F\) (with \(\F\geq 4\)) is non-empty, and we can define \(K\) as the product of all the subgroups in \({\mathcal K}\). Let us fix a subset \(\{N_1,...,N_\ell\}\) of \({\mathcal K}\)
%of the smallest possible size  such that \(K=N_1\cdot N_2 \cdots N_t\): then, as already observed, we have \(K/\zent K\simeq N_1/\zent{N_1}\times\cdots\times N_t/\zent{N_t}\).
such that \(K/\zent K\simeq N_1/\zent{N_1}\times\cdots\times N_\ell/\zent{N_\ell}\).
For \(i\in\{1,...,\ell\}\), set \(C_i=\cent G{N_i}\), and let \(u_i^{\alpha_i}\) be a prime power such that \(N_i/\zent{N_i}\simeq\PSL{u_i^{\alpha_i}}\). Also, set \(C=\cent G K\).

We proceed by induction on the order of the group.  
First, we reduce to the case   $C=1$.
As obviously $C$ does not have any characteristic subgroup isomorphic to a \(2\)-dimensional special or projective special linear group over a finite field with at least four elements, Proposition~\ref{applications1}(a) ensures that \({\overline{\Delta}}(C)\) is a bipartite graph, whence there exist two cliques in \(\Delta(C)\) which cover all the elements of \(\V C\); in particular, $|\V C \setminus \V{G/C}|\leq 2\omega$, where $\omega$ is the maximum size of a clique in the subgraph of \(\Delta(C)\) induced by $\V C \setminus \V{G/C}$.
Hence, observing that \(\omega(G/C)\leq 5\), assuming $C\neq 1$  yields by  induction  $|\V{G/C}|\leq 2\omega(G/C)+1$, and we deduce that \(|\V G|\leq 2(\omega+\omega(G/C))+1\leq 2\omega(G)+1\) by an application of Proposition~\ref{applications1}(c) (observe that the possible exception discussed in Proposition~\ref{applications1}(c) is not relevant here; in fact, when \(K=N_1\) and \(u_1\neq 2\), a clique of \(\Delta(G/C)\) having maximal size does not involve \(u_1\)).
Thus, we assume that \(C=1\); then, in particular, \(\zent K\) is trivial and so are the subgroups \(\zent{N_i}\) for all \(i\in\{1,...,\ell\}\).

In view of the last observation in the above paragraph, Proposition~\ref{applications1}(c) yields that all the primes in $\V{C_i}$ are adjacent  to all the prime divisors of the almost simple group (with socle $\PSL{u_i^{\alpha_i}}$) $G/C_i$. 
We can  assume that  $\ell\geq 2$, as otherwise \(G\) is an almost simple group whose socle is isomorphic to \(\PSL{u^{\alpha}}\) for some \(u^{\alpha}\geq 4\), and  Lemma~\ref{W}(c)  yields $|\V G|\leq 2 \omega(G)+1$.

% Let us now assume that \(G\) is a counterexample of minimal order to the statement.
We  claim  that we can reduce to the situation when, for every \(i\in\{1,...,\ell\}\), we have:  
\begin{enumerate}
\item \(u_i\not\in\V{C_i}\), and
\item neither \(\pi(u_i^{\alpha_i}-1)\) nor \(\pi(u_i^{\alpha_i}+1)\) is contained in \(\V{C_i}\).
\end{enumerate}
In fact, as \(\omega(C_i)\) is clearly at most \(5\), induction yields \(|\V{C_i}|\leq 2\omega(C_i)+1\); moreover, by Lemma~\ref{W}(c) three cliques of \(\Delta(G/C_i)\) are enough to cover all the vertices in \(\V{G/C_i}\). But if we assume the contrary of either (1) or (2), then in fact two cliques of \(\Delta(G/C_i)\) cover all the vertices in \(\V{G/C_i}\setminus\V{C_i}\); thus, if \(\omega\) is the clique number of the subgraph of \(\Delta(G/C_i)\) induced by \(\V{G/C_i}\setminus\V{C_i}\), we get \(|\V{G/C_i}\setminus\V{C_i}|\leq 2\omega\).
Therefore,
% by Proposition~\ref{applications1}(c)
we obtain \[|\V{G}|=|\V{G/C_i}\cup\V {C_i}|\leq 2(\omega(C_i)+\omega)+1\leq 2\omega(G)+1,\] and we are done.
%Note that the claim we just proved ensures \(u_i\not\in\{2,3\}\) for every \(i\in\{1,...,t\}\).

  As a  consequence,  the \(N_i\) have pairwise distinct characteristics (by (1)), that is $u_i \neq u_j$ for $1 \leq i \neq j \leq\ell$,  and $u_i \neq 2, 3$ for all $1 \leq i \leq\ell$.

Assume then $\ell=2$. Then both $C_1$ and $G/C_1$ are almost-simple groups (with socle isomorphic, respectively, to $\PSL{u_2^{\alpha_2}}$  and $\PSL{u_1^{\alpha_1}}$).  
    By Lemma~\ref{W}, the primes in \(\V{C_1}\setminus\{u_2\}\) are covered by two cliques of \(\Delta(C_1)\) \emph{which intersect at least in the vertex \(2\)}, so \(|\V{C_1}|\leq 2\omega(C_1)\).
    On the other hand, also the primes in \(\V{G/C_1}\setminus \{u_1\}\) are covered by two cliques in $\Delta(G/C_1)$ (and hence in $\Delta(G)$). Hence, it follows that   \(|\V{G/C_1}\setminus \V{C_1}|\leq 2\omega_0 +1\) where \(\omega_0\) is the clique number of the subgraph of \(\Delta(G/C_1)\) induced by the set of vertices  \(\V{G/C_1}\setminus \V{C_1}\).
    As all the primes in $\V{C_1}$ are adjacent  to all the primes in $\V {G/C_1}$, we get $|\V G|\leq 2\omega(G)+1$.

    We can hence assume that \(\ell\) is at least \(3\).
    As $u_i \neq 2$,  for all $i \in \{1, \ldots, \ell\}$ we have that $\{2,3\}$ lies in \(\pi_i^*\), where $\pi_i^*$ is either $\pi(u_i^{\alpha_i} +1)$ or $\pi(u_i^{\alpha_i} -1)$.
    Note that \(C_j\) contains \(N_i\) for all \(j\neq i\), so   $\{2,3\}$ lies in \(\pi(C_j)\) for every \(j\in\{1,...,\ell\}\).
    Recalling (2), we deduce that there exists a prime $p_i \in \pi_i^*$, $p_i \neq 2,3$,   and hence  $\omega(N_i) \geq 3$.
    If  a set $V_1$ of four primes in (say)  $\V{N_1}$ induces a clique in $\Delta(N_1)$, then  the set \(V_1 \cup \{u_2,u_3\}\) would induce a clique in \(\Delta(G)\), a contradiction;
    the same of course holds for any  \(N_i\).
Therefore, we have $\omega(N_i)=3$ for all $i \in \{1, \ldots, \ell\}$, and  $\pi_i^* = \{2, 3, p_i\}$.
    Arguing along the same line, we deduce that $\ell=3$ and  $\omega(G)=5$.

Also,  only the clique of \(\Delta(N_1)\) containing \(\{2,3\}\) can have three vertices. In fact, if (say)  \(\{2,q_1,s_1\}\) induces a clique in \(\Delta(N_1)\) with \(3\not\in\{q_1,s_1\}\), we would have $\{q_1,s_1, 2, 3, u_2,u_3\}\) inducing a clique in \(\Delta(G)\); the same of course holds for \(N_2\) and \(N_3\).
Recalling (2), a clear consequence of this fact is that \(|\V{N_i}|=5\) for \(i\in\{1,2,3\}\); moreover, we have \(\V{N_i}\cap\V{N_j}=\{2,3\}\) whenever \(i\neq j\). 

Finally, we have $\V{G/C_i} = \V{N_i}$ for $i\in \{1,2,3\}$ as well. Namely, if (say) \(\V{G/C_1}\) has an element \(r\) which is not in \(\V{N_1}\), then
\(\{2,3,p_1,r,u_2,u_3\}\) would induce a clique in \(\Delta(G)\).
% (here we are again using Propostion~\ref{applications1}(c)). So \(|\V{G/C_i}\setminus\V{C_i}|=|\V{N_i}\setminus\{2,3\}|\)=3, as claimed. 

As a consequence,  we have $\V G=\V{N_1\times N_2\times N_3}$ and hence by induction  we may assume $G=N_1\times N_2\times N_3$. 
This implies that $|\V G|=5+3+3=11 = 2 \omega(G) +1$, and the proof is complete.
\end{proof}

\begin{proof}[Proof of {\rm (b)}]
Let \(G\) be a counterexample of minimal order to the statement; as the conclusions in (a) and (b) coincide for \(\omega(G)=5\), we have \(\omega(G)\geq 6\). Moreover, as in (a), the graph \({\overline\Delta}(G)\) must have cycles of odd length, whence there exists a subgroup \(N\) of \(G\) as in the conclusion of Proposition~\ref{applications1}(a). 

Set \(C=\cent G N\), and define \(\omega\) to be the maximum size of a clique in the subgraph of \(\Delta(G/C)\) induced by \(\V{G/C}\setminus\V C\); recalling Lemma~\ref{PSL2} and Proposition~\ref{W}, we see that \(|\V{G/C}\setminus\V C|\leq 2\omega+1\). Moreover, we have \(|\V C|\leq 2\omega(C)+1\) if \(\omega(C)\leq 5\) and, by the minimality of \(G\), \(|\V C|\leq 3\omega(C)-4\) if \(\omega(C)\geq 5\). Note that \(\omega\) cannot be \(0\), as otherwise (by Proposition~\ref{applications1}(c)) we would have \(\V G=\V C\), and the previous inequalities would produce an immediate contradiction.

Assuming that we are in the case \(\omega(C)\leq 5\), Proposition~\ref{applications1}(c) yields $|\V G|\leq 2(\omega+\omega(C))+2$. Now, if \(\omega+\omega(C)\leq 5\), we get \(|\V G|\leq 12<14\leq 3\omega(G)-4\), and \(G\) is not a counterexample; on the other hand, if \(\omega+\omega(C)\geq 6\), then \(\V G\leq 2(\omega+\omega(C))+2\leq 3(\omega+\omega(C))-4\leq 3\omega(G)-4\), again a contradiction.

Thus, we must have \(\omega(C)\geq 5\). But then we get $$|\V G|\leq(2\omega+1)+ (3\omega(C)-4)\leq 3(\omega+\omega(C))-4\leq 3\omega(G)-4,$$ the final contradiction that completes the proof.
\end{proof}

\section*{Acknowledgements}
This research has been carried out during a visit of the first and fourth author at the Dipartimento di Matematica e Informatica ``U. Dini" (DIMAI) of the University of Firenze.  They wish to thank the DIMAI for the hospitality.

\end{document}